\newtheorem{theorem}{Theorem}[section]
\newtheorem{lemma}[theorem]{Lemma}
\newtheorem{corollary}[theorem]{Corollary}
\theoremstyle{definition}
\theoremstyle{remark}
\newtheorem{remark}[theorem]{Remark}
\newcommand{\norm}[1]{\left\|#1\right\|}
\numberwithin{equation}{section}
\begin{document}

\title[Klee-Phelps Convex Groupoids]{Klee-Phelps Convex Groupoids}

\author[J.F. Peters]{J.F. Peters}
\email{James.Peters3@umanitoba.ca}
\address{Computational Intelligence Laboratory,
University of Manitoba, WPG, MB, R3T 5V6, Canada}
\thanks{The research has been supported by the Scientific and Technological Research
Council of Turkey (T\"{U}B\.{I}TAK) Scientific Human Resources Development
(BIDEB) under grant no: 2221-1059B211402463 and Natural Sciences \&
Engineering Research Council of Canada (NSERC) discovery grant 185986.}
\author[M. A. \"{O}zt\"{u}rk]{M.A. \"{O}zt\"{u}rk}
\email{maozturk@adiyaman.edu.tr}
\author[M. U\c{c}kun]{M. U\c{c}kun}
\email{muckun@adiyaman.edu.tr}
\address{Department of Mathematics, Faculty of Arts and Sciences, Ad\i yaman University, Ad\i yaman, Turkey.}

\subjclass[2010]{Primary 20N02; Secondary 54E05, 52A99}

\date{}

\dedicatory{}

\commby{Pham Huu Tiep}

\begin{abstract}
We prove that a pair of proximal Klee-Phelps convex groupoids $A(\circ)$,$B(\circ)$ in a finite-dimensional normed linear space $E$ are normed proximal, {\em i.e.}, $A(\circ)\ \delta\ B(\circ)$ if and only if the groupoids are normed proximal.   In addition, we prove that the groupoid neighbourhood $N_z(\circ)\subseteq S_z(\circ)$ is convex in $E$ if and only if $N_z(\circ) = S_z(\circ)$.
\end{abstract}

\keywords{Convex groupoid, proximal, normed linear space.}

\maketitle

\section{Introduction}
Klee-Phelps groupoids are named after V.L. Klee and R.R. Phelps.  It was Klee, who characterized a convex set in terms of a subset of a finite-dimensional Euclidean space that is the set of all points that are nearest a point in the space~\cite{Klee1949}.  It was Phelps, who proved that for a subset $S$ in a inner product space $E$, the set $S_z$ of all points having $z\in S$ as the nearest point is a convex set~\cite{Phelps1957}.  Let $S_z$ be the set of all points in $E$ having $z\in S$ as the nearest point in $S$, defined by
\[
S_z = \left\{x\in E: \left\|x - z\right\| = \mathop{inf}\limits_{y\in S}\left\|x - y\right\|\right\}.
\]

\begin{lemma}\label{PhelpsL} {\rm (Phelps' Lemma~\cite{Phelps1957})} If $E$ is an inner product space, $S\subset E$ and $z\in S$, then $S_{z}$ is convex.
\end{lemma}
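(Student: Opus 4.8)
The plan is to exhibit $S_z$ as an intersection of closed half-spaces, from which convexity is immediate. The first step is a reduction: since $z\in S$, we always have $\inf_{y\in S}\norm{x-y}\le\norm{x-z}$, so $x\in S_z$ if and only if $\norm{x-z}$ is a lower bound for $\{\norm{x-y}:y\in S\}$, i.e.\ if and only if $\norm{x-z}\le\norm{x-y}$ for every $y\in S$. As norms are nonnegative, this is in turn equivalent to the system $\norm{x-z}^2\le\norm{x-y}^2$, $y\in S$. Note that no attainment of the infimum is needed for this equivalence.

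The second step linearizes each such inequality using the inner product on $E$. Expanding $\norm{x-z}^2=\norm{x}^2-2\langle x,z\rangle+\norm{z}^2$ and likewise for $\norm{x-y}^2$, the $\norm{x}^2$ terms cancel, and $\norm{x-z}^2\le\norm{x-y}^2$ becomes the affine condition $2\langle x,\,y-z\rangle\le\norm{y}^2-\norm{z}^2$. The crucial observation is that the right-hand side is a constant independent of $x$, so for each fixed $y\in S$ the solution set $H_y=\{x\in E:2\langle x,y-z\rangle\le\norm{y}^2-\norm{z}^2\}$ is a closed half-space (and equals all of $E$ when $y=z$), hence convex.

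Combining the two steps yields $S_z=\bigcap_{y\in S}H_y$, an intersection of convex sets, which is convex; this finishes the argument. I do not expect a genuine obstacle here. The only points requiring care are the reduction in the first step (replacing the defining equality by the family of inequalities, where $z\in S$ is used) and the fact that the expansion of $\norm{\cdot}^2$ in the second step genuinely relies on the inner product structure — this is precisely where the hypothesis that $E$ is an inner product space rather than an arbitrary normed space enters. One could instead argue directly by fixing $x_1,x_2\in S_z$ and $\lambda\in[0,1]$ and verifying $\norm{x_\lambda-z}\le\norm{x_\lambda-y}$ for $x_\lambda=\lambda x_1+(1-\lambda)x_2$, but the half-space description is cleaner and is the route I would follow.
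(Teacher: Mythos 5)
Your proof is correct. Note that the paper does not prove this lemma at all --- it is stated purely as a citation to Phelps (1957) --- and your half-space argument ($x\in S_z$ iff $\norm{x-z}\le\norm{x-y}$ for all $y\in S$, each such condition defining the closed half-space $H_y=\{x: 2\langle x,y-z\rangle\le\norm{y}^2-\norm{z}^2\}$, so $S_z=\bigcap_{y\in S}H_y$) is essentially the classical proof, correctly using $z\in S$ for the reduction and the inner product structure for the linearization.
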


The main result in this paper is that a pair of Klee-Phelps convex groupoids $A(\circ),B(\circ)$ in a finite-dimensional normed linear space are proximal if and only if $A(\circ),B(\circ)$ are normed proximal.
%
\section{Preliminaries}

Let $E$ be a finite-dimensional real normed linear space, $A,B\subset E$, $x,y\in E$. The Hausdorff distance
$D(x,A)$ is defined by $D(x,A)=inf\left\{\norm{x-y}: y\in
A\right\}  $ and $\norm{x-y}$ is the distance between vectors
$x$ and $y$. The \u{C}ech closure~\cite{Cech1966} of $A$ (denoted by $\mbox{cl}A$) is
defined by $\mbox{cl}A=\left\{  x\in V:D(x,A)=0\right\}  $. The sets $A$ and $B$ are
proximal (near) (denoted $A\ \delta\ B$), provided $\mbox{cl}A\ \cap\ \mbox{cl}B\neq
\emptyset$~\cite{Concilio2009,Naimpally2012}.  A nonempty space endowed with a proximity relation is
called a proximity space~\cite{Peters2012ams}.  The space $E$ endowed with the proximity
relation $\delta$ is called a proximal linear space.  The assumption made here is that each proximal linear space is a topological space that provides the structure needed to define proximity relations.  The proximity relation $\delta$ defines a nearness relation between convex groupoids useful in many applications.  
A subset $K\subset E$ is \emph{convex}, provided, for every pair points $x,y\in K$, the line segment $\overline{xy}$ connecting $x$ and $y$ belongs to $K$.

Let $S\subset E, x,y\in S$, and $S_x,S_y$ are nonempty Klee-Phelps nearest point sets.  From Lemma~\ref{PhelpsL}, $S_x,S_y$ are convex sets.  
Sets $S_x,S_y$ are proximal if and only if $\norm{a-b} = 0$ for some $a\in \mbox{cl}S_x, b\in \mbox{cl}S_y$.  That is, convex sets $S_x,S_y$ are near, provided the convex set $S_x$ has at least one point in common with $S_y$.  In effect, $S_x,S_y$ are proximal if and only if $cl(S_x)\  \cap \ cl(S_y)\neq \emptyset$.
A \textit{convex hull} of a subset $A$ in $E$ is the smallest convex set that contains $A$.

\begin{lemma}\label{lem:hull} 
If $\mbox{cl}(S_{z})$ is a convex hull in $E$, then $S_{z}\subseteq \mbox{cl}S_{z}$.
\end{lemma}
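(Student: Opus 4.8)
The plan is to obtain the inclusion directly from the definition of the \u{C}ech closure, since extensivity (the property $A\subseteq \mbox{cl}A$) holds for every closure operator and in particular for the one defined here. First I would fix an arbitrary point $x\in S_z$ and examine the Hausdorff distance $D(x,S_z)=\inf\left\{\norm{x-y}:y\in S_z\right\}$. Choosing $y=x$ in the infimum gives $D(x,S_z)\le \norm{x-x}=0$, and since the norm is nonnegative this forces $D(x,S_z)=0$. By the definition $\mbox{cl}S_z=\left\{x\in E: D(x,S_z)=0\right\}$, we conclude $x\in \mbox{cl}S_z$; as $x\in S_z$ was arbitrary, $S_z\subseteq \mbox{cl}S_z$.

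Alternatively, one may read the hypothesis at face value: if $\mbox{cl}(S_z)$ is a convex hull, then — a convex hull of a set $A$ being by definition the smallest convex set containing $A$ — it is the convex hull of $S_z$, and hence contains $S_z$. Either route is short, and the two are compatible: the convex-hull hypothesis already guarantees the inclusion, while the closure computation shows it holds regardless of that hypothesis.

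The only point that needs any care is to confirm that the \u{C}ech closure is extensive in the present setting, i.e.\ that $D(x,S_z)=0$ really does hold for every $x\in S_z$; but this is immediate from the displayed definition of $D$ together with $\norm{x-x}=0$ in a normed linear space. No appeal to finite-dimensionality, to completeness, or to the Klee--Phelps nearest-point structure of $S_z$ is required — the inclusion is valid for an arbitrary subset of $E$ — so I do not expect a genuine obstacle here; the content of the lemma is essentially the recording of extensivity for later use.
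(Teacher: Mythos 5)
Your argument is correct, and it is not the route the paper takes. You prove the inclusion as an instance of extensivity of the \u{C}ech closure: for $x\in S_z$, taking $y=x$ in $D(x,S_z)=\inf\{\norm{x-y}:y\in S_z\}$ gives $D(x,S_z)=0$, hence $x\in \mbox{cl}(S_z)$ by the paper's definition of closure. This is airtight, needs no convexity, no convex-hull hypothesis, and no finite-dimensionality, and your remark that the hypothesis is inessential is accurate. The paper's own proof is different in character: it starts from a point $x\in (X\setminus S_z)$ that coincides with some $y\in\mbox{cl}(S_z)$, concludes $x\in\mbox{cl}(S_z)$, and then asserts the inclusion --- i.e.\ it argues about points \emph{outside} $S_z$ that the closure picks up, rather than verifying that each point of $S_z$ itself lies in the closure, and as written it does not actually derive $S_z\subseteq\mbox{cl}(S_z)$ from what it establishes. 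Your version is the more elementary and the more complete of the two: it records exactly the extensivity fact the lemma is meant to supply for later use, whereas the paper's argument leaves the key step implicit (and leans on the undeclared set $X$). If you want to honor the stated hypothesis at all, your one-line alternative --- that the convex hull of $S_z$ contains $S_z$ by definition --- is a fine parenthetical, but the closure computation should be the main proof.
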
 
\begin{proof}
Let $x\in (X\setminus S_{z})$ such that $x = y$ for some $y\in \mbox{cl}(S_{z})$. Consequently, $x\in \mbox{cl}(S_{z})$.  Hence, $S_{z}\subseteq \mbox{cl}(S_{z})$.
\end{proof} 

Let $S\subset E, z\in S, \varepsilon \mathrel{\mathop :}= \mathop{inf}\limits_{y\in S}\norm{x - y}$.
The neighborhood of a point in a Klee-Phelps convex set (briefly, $N_z$) is defined by
\[
N_{z,\varepsilon} = \left\{x\in E: \norm{x-z} \leq \varepsilon\right\}.
\]

\begin{lemma}\label{lem1} If $S_z$ is a Klee-Phelps convex set in $E$, $S\subset E, z\in S$, then $N_{z,\varepsilon} \cap \mbox{cl}S_z\neq \emptyset$.
\end{lemma}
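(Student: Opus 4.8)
The plan is to exhibit an explicit point lying in both $N_{z,\varepsilon}$ and $\mbox{cl}\,S_z$, namely the point $z$ itself. First I would record the elementary fact that $z\in S_z$: since $z\in S$, the infimum $\mathop{inf}_{y\in S}\norm{z-y}$ is attained at $y=z$ with value $\norm{z-z}=0$, so $z$ has $z$ as its nearest point in $S$, and therefore $z\in S_z$ by the defining condition of the Klee--Phelps set. Invoking Lemma~\ref{lem:hull} (or simply the definition of the \v{C}ech closure, since $D(z,S_z)=0$), this gives $z\in S_z\subseteq \mbox{cl}\,S_z$.

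Next I would check that $z\in N_{z,\varepsilon}$. By definition $N_{z,\varepsilon}=\left\{x\in E:\norm{x-z}\le\varepsilon\right\}$, and $\varepsilon=\mathop{inf}_{y\in S}\norm{x-y}\ge 0$ because it is an infimum of norms; hence $\norm{z-z}=0\le\varepsilon$, so $z\in N_{z,\varepsilon}$. Combining the two observations, $z\in N_{z,\varepsilon}\cap\mbox{cl}\,S_z$, which shows the intersection is nonempty.

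The argument is essentially a direct verification from the definitions, so there is no substantial obstacle; the only points requiring a moment's care are confirming that $\varepsilon\ge 0$ (so that the closed ball $N_{z,\varepsilon}$ actually contains its centre) and that $S\neq\emptyset$, which is implicit in $z\in S$ and is what makes $S_z$ and the infimum defining $\varepsilon$ meaningful. One could also state the conclusion more strongly: since $z$ already belongs to $S_z$, in fact $N_{z,\varepsilon}\cap S_z\neq\emptyset$, and a fortiori $N_{z,\varepsilon}\cap\mbox{cl}\,S_z\neq\emptyset$.
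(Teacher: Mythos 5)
Your proof is correct, and it is a direct definitional verification in the same spirit as the paper's, but the route is slightly different and in fact a little tighter. The paper argues that \emph{every} $x\in S_z$ satisfies $\norm{x-z}=\mathop{inf}_{y\in S}\norm{x-y}$ and hence lies in $N_{z,\varepsilon}$, i.e.\ it establishes the containment $S_z\subseteq N_{z,\varepsilon}$ and then concludes $N_{z,\varepsilon}\cap \mbox{cl}S_z\neq\emptyset$; this tacitly uses that $S_z$ is nonempty (an assumption carried along from the preliminaries) and glosses over the fact that $\varepsilon$, as written, depends on the point $x$. You instead exhibit the single explicit witness $z$: since $z\in S$, the infimum $\mathop{inf}_{y\in S}\norm{z-y}$ equals $0$ and is attained at $y=z$, so $z\in S_z\subseteq\mbox{cl}S_z$ (the inclusion holding already from the \u{C}ech closure definition, as you note, without needing Lemma~\ref{lem:hull}), and $z\in N_{z,\varepsilon}$ because $\varepsilon\geq 0$. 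This buys you two things the paper's version leaves implicit: it proves $S_z\neq\emptyset$ rather than assuming it, and it avoids any ambiguity about which $x$ the quantity $\varepsilon$ refers to, since only $\varepsilon\geq 0$ is needed. Your closing remark that the stronger statement $N_{z,\varepsilon}\cap S_z\neq\emptyset$ holds is also consistent with what the paper's argument actually shows.
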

\begin{proof}
For each $x\in S_z$, $\norm{x - z} = \mathop{inf}\limits_{y\in S}\norm{x - y}$.  Consequently, $x\in N_{z,\varepsilon}$.  Hence, $N_{z,\varepsilon} \cap \mbox{cl}S_z\neq \emptyset$.
\end{proof}

\begin{corollary}
If $S_z$ is a Klee-Phelps convex set in $E$, $S\subset E, z\in S$, then $N_{z,\varepsilon}\ \delta\ S_z$.
\end{corollary}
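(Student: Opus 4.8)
The plan is to derive this directly from Lemma~\ref{lem1} together with the definition of the proximity relation $\delta$. Recall that two subsets $A,B\subseteq E$ satisfy $A\ \delta\ B$ exactly when $\mbox{cl}A\ \cap\ \mbox{cl}B\neq\emptyset$, so it suffices to exhibit a single point lying in both $\mbox{cl}N_{z,\varepsilon}$ and $\mbox{cl}S_z$.

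First I would invoke Lemma~\ref{lem1}, which guarantees that $N_{z,\varepsilon}\cap\mbox{cl}S_z\neq\emptyset$, and choose a witness $x$ in this intersection. Then I would use the fact that every subset of $E$ is contained in its \u{C}ech closure (the monotone inclusion $A\subseteq\mbox{cl}A$, already exploited in Lemma~\ref{lem:hull}): from $x\in N_{z,\varepsilon}$ it follows that $x\in\mbox{cl}N_{z,\varepsilon}$, while $x\in\mbox{cl}S_z$ holds by the choice of $x$.

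Hence $x\in\mbox{cl}N_{z,\varepsilon}\cap\mbox{cl}S_z$, so this intersection is nonempty and, by the definition of $\delta$, $N_{z,\varepsilon}\ \delta\ S_z$. There is essentially no obstacle here: the only point requiring care is aligning the two closures in the definition of $\delta$ with the ``plain'' set $N_{z,\varepsilon}$ and the already-closed set $\mbox{cl}S_z$ that appear in Lemma~\ref{lem1}, and the inclusion $A\subseteq\mbox{cl}A$ disposes of this at once. One could alternatively observe that $N_{z,\varepsilon}$ is itself closed, being the preimage of $[0,\varepsilon]$ under the continuous map $x\mapsto\norm{x-z}$, so that $\mbox{cl}N_{z,\varepsilon}=N_{z,\varepsilon}$; but this sharper remark is not needed for the argument.
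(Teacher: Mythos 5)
Your argument is correct and is essentially the paper's own proof, which simply cites Lemma~\ref{lem1} and the definition of $\delta$; you merely spell out the small bookkeeping step $N_{z,\varepsilon}\subseteq\mbox{cl}N_{z,\varepsilon}$ needed to pass from Lemma~\ref{lem1} to the closure-intersection form of proximity. No issues.
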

\begin{proof}
Immediate from Lemma~\ref{lem1} and the definition of proximity\ $\delta$.
\end{proof}

\begin{theorem}\label{thm:convexNbd}
Let $S_z$ be a Klee-Phelps convex set in $E$, $S\subset E, z\in S$.  $N_{z,\varepsilon}\subseteq S_z$ is convex if and only if
$N_{z,\varepsilon} = S_z$.
\end{theorem}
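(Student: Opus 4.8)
The plan is to prove the two implications separately. The implication ``$N_{z,\varepsilon}=S_z \Rightarrow N_{z,\varepsilon}$ convex'' rests on Phelps' Lemma, while the implication ``$N_{z,\varepsilon}\subseteq S_z$ convex $\Rightarrow N_{z,\varepsilon}=S_z$'' rests on the defining property of the nearest-point set $S_z$, with Lemma~\ref{lem1} available to tie $N_{z,\varepsilon}$ to $\mbox{cl}\,S_z$.

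First I would dispatch the easy direction. Suppose $N_{z,\varepsilon}=S_z$. Since $S_z$ is a Klee-Phelps nearest-point set, Lemma~\ref{PhelpsL} yields that $S_z$ is convex, and convexity is carried across the equality, so $N_{z,\varepsilon}$ is convex. No computation is required here.

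For the converse, suppose $N_{z,\varepsilon}\subseteq S_z$ and $N_{z,\varepsilon}$ is convex; it suffices to establish the reverse inclusion $S_z\subseteq N_{z,\varepsilon}$. Fix $x\in S_z$. By the definition of $S_z$, $\norm{x-z}=\inf_{y\in S}\norm{x-y}=\varepsilon$, so $\norm{x-z}\le\varepsilon$ and hence $x\in N_{z,\varepsilon}$ --- this is precisely the computation already performed in the proof of Lemma~\ref{lem1}. Therefore $S_z\subseteq N_{z,\varepsilon}$, and combining this with the hypothesis $N_{z,\varepsilon}\subseteq S_z$ gives $N_{z,\varepsilon}=S_z$.

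The step I expect to demand the most care is the reverse inclusion $S_z\subseteq N_{z,\varepsilon}$: one has to be sure that $\varepsilon$ is exactly the value $\norm{x-z}=D(x,S)$ attained at points of $S_z$ rather than a strictly smaller radius, for otherwise $S_z$ could fail to sit inside the ball $N_{z,\varepsilon}$. Checking this against the definitions --- and invoking Lemma~\ref{lem1} together with its corollary, and Lemma~\ref{PhelpsL}, to keep $N_{z,\varepsilon}$, $\mbox{cl}\,S_z$, and the convexity of $S_z$ all mutually consistent --- is where I would focus the effort, after which the two inclusions close the argument.
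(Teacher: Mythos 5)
Your argument is correct and essentially the paper's own: the paper's terse chain of equivalences amounts to exactly your two inclusions, the reverse inclusion $S_z\subseteq N_{z,\varepsilon}$ being the same computation as in the proof of Lemma~\ref{lem1} (each $x\in S_z$ satisfies $\norm{x-z}=\inf_{y\in S}\norm{x-y}=\varepsilon$), with convexity supplied by the convexity of $S_z$ (hypothesis/Lemma~\ref{PhelpsL}). Your write-up merely makes this explicit, including the caveat that $\varepsilon$ must be read as the distance from the point in question to $S$ --- an ambiguity the paper's own proof glosses over.
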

\begin{proof}
$N_{z,\varepsilon}\subseteq S_z\Leftrightarrow$ $\norm{x - z} = \mathop{inf}\limits_{y\in S}\norm{x - y}$ for each $x\in S_z\Leftrightarrow N_{z,\varepsilon} = S_z$.  Hence, $N_z$ is convex.
\end{proof}

A groupoid is a system $S(\circ)$ that consists of a nonempty set $S$ together with a
binary operation $\circ$ on $S$~\cite{Clifford1964}.  A Klee-Phelps groupoid is a system $S_z(\circ)$ that consists of a nonempty convex set $S_z\subset E$ together with a binary operation $\circ$ on $S_z$ such that $S_z(\circ)\subseteq S_z$.  

\begin{corollary}
Let $S_z(\circ)$ is a Klee-Phelps convex groupoid in $E$, $S\subset E, z\in S$.  The neighbourhood groupoid $N_{z,\varepsilon}(\circ)\subseteq S_z(\circ)$ is convex if and only if $N_{z,\varepsilon}(\circ) = S_z(\circ)$.
\end{corollary}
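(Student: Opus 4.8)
The plan is to deduce this groupoid statement directly from Theorem~\ref{thm:convexNbd} by passing between a Klee-Phelps convex groupoid and its underlying convex carrier set. First I would make the dictionary explicit: by the definition of a Klee-Phelps groupoid, the neighbourhood groupoid $N_{z,\varepsilon}(\circ)$ is carried by the convex neighbourhood set $N_{z,\varepsilon}$ together with a binary operation $\circ$ satisfying $N_{z,\varepsilon}(\circ)\subseteq N_{z,\varepsilon}$, while $S_z(\circ)$ is carried by $S_z$ with $S_z(\circ)\subseteq S_z$; the operation on $N_{z,\varepsilon}$ is the restriction of the one on $S_z$, so the inclusion $N_{z,\varepsilon}(\circ)\subseteq S_z(\circ)$ of groupoids is exactly the inclusion $N_{z,\varepsilon}\subseteq S_z$ of carriers, and to say that the groupoid $N_{z,\varepsilon}(\circ)$ is convex is to say that its carrier $N_{z,\varepsilon}$ is convex.

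For the forward implication, suppose $N_{z,\varepsilon}(\circ)\subseteq S_z(\circ)$ is convex. Then $N_{z,\varepsilon}\subseteq S_z$ with $N_{z,\varepsilon}$ convex, so Theorem~\ref{thm:convexNbd} yields $N_{z,\varepsilon} = S_z$. Since the two carriers coincide and the binary operation of $N_{z,\varepsilon}(\circ)$ is the restriction of that of $S_z(\circ)$, the two groupoids agree: $N_{z,\varepsilon}(\circ) = S_z(\circ)$. Conversely, if $N_{z,\varepsilon}(\circ) = S_z(\circ)$, then in particular their carriers agree, $N_{z,\varepsilon} = S_z$; this set is the Klee-Phelps convex set $S_z$, hence convex, and by Theorem~\ref{thm:convexNbd} the groupoid $N_{z,\varepsilon}(\circ)\subseteq S_z(\circ)$ is convex. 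This closes the equivalence.

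The step I expect to need the most care is the passage from equality of carrier sets to equality of groupoids, that is, confirming that $\circ$ on $N_{z,\varepsilon}$ really is the restriction of $\circ$ on $S_z$ so that no extra structure is lost, and similarly that ``convex groupoid'' is unambiguously convexity of the carrier. Once this bookkeeping with the definitions is in place, the corollary is a direct transcription of Theorem~\ref{thm:convexNbd}, with no genuinely new analytic content.
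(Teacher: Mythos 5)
Your proposal is correct and takes essentially the same route as the paper: the paper's proof is simply ``Immediate from Theorem~\ref{thm:convexNbd},'' and your argument is just that deduction spelled out, with the (reasonable) bookkeeping that convexity and equality of the groupoids are read off from their carrier sets $N_{z,\varepsilon}$ and $S_z$.
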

\begin{proof}
Immediate from Theorem~\ref{thm:convexNbd}.
\end{proof}

\begin{theorem}\label{thm:nearConvexity}
Let $U,V$ be proximal linear spaces, $z\in S\subset U, z^{\prime}\in S^{\prime}\subset V$ and let $S_z\left(\circ\right), S^{\prime}_{z^{\prime}}\left(\circ\right)$ be Klee-Phelps convex groupoids. Then 
\[
\mbox{cl}\left(
S_{z}\right) \cap \mbox{cl}\left(S^{\prime}_{z^{\prime}}\right)\neq\emptyset \Leftrightarrow S_{z}\left(\circ\right)\ \delta\ S^{\prime}_{z^{\prime}}\left(\circ\right).
\]
\end{theorem}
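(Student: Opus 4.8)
The plan is to show that the assertion $S_{z}\left(\circ\right)\ \delta\ S^{\prime}_{z^{\prime}}\left(\circ\right)$ reduces to a purely set-theoretic condition on the \u{C}ech closures of the underlying convex nearest-point sets, and then to verify the two implications separately. Recall that, by definition, $A\left(\circ\right)\ \delta\ B\left(\circ\right)$ holds precisely when $\mbox{cl}A\ \cap\ \mbox{cl}B\neq\emptyset$, and that a Klee-Phelps convex groupoid $S_{z}\left(\circ\right)$ has carrier contained in the convex set $S_{z}$, i.e. $S_{z}\left(\circ\right)\subseteq S_{z}$ (and likewise $S^{\prime}_{z^{\prime}}\left(\circ\right)\subseteq S^{\prime}_{z^{\prime}}$). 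So the whole statement amounts to comparing $\mbox{cl}\left(S_{z}\right)\cap \mbox{cl}\left(S^{\prime}_{z^{\prime}}\right)$ with $\mbox{cl}\left(S_{z}\left(\circ\right)\right)\cap \mbox{cl}\left(S^{\prime}_{z^{\prime}}\left(\circ\right)\right)$.

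For the implication $S_{z}\left(\circ\right)\ \delta\ S^{\prime}_{z^{\prime}}\left(\circ\right)\Rightarrow \mbox{cl}\left(S_{z}\right)\cap \mbox{cl}\left(S^{\prime}_{z^{\prime}}\right)\neq\emptyset$, the argument is formal. Unfolding the definition of $\delta$, there is a point $p$ with $p\in \mbox{cl}\left(S_{z}\left(\circ\right)\right)$ and $p\in \mbox{cl}\left(S^{\prime}_{z^{\prime}}\left(\circ\right)\right)$, i.e. $D(p,S_{z}\left(\circ\right))=0=D(p,S^{\prime}_{z^{\prime}}\left(\circ\right))$. Since $S_{z}\left(\circ\right)\subseteq S_{z}$ and $D(\,\cdot\,,-)$ is monotone in its second argument, $D(p,S_{z})\leq D(p,S_{z}\left(\circ\right))=0$, hence $p\in \mbox{cl}\left(S_{z}\right)$; the same reasoning in $V$ gives $p\in \mbox{cl}\left(S^{\prime}_{z^{\prime}}\right)$. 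Thus $p\in \mbox{cl}\left(S_{z}\right)\cap \mbox{cl}\left(S^{\prime}_{z^{\prime}}\right)$, so this intersection is nonempty, and equivalently $\norm{a-b}=0$ for $a=b=p$, which is the normed proximality of $S_{z}$ and $S^{\prime}_{z^{\prime}}$. This direction uses only monotonicity of the Hausdorff distance and the defining inclusion of the Klee-Phelps groupoid.

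For the converse $\mbox{cl}\left(S_{z}\right)\cap \mbox{cl}\left(S^{\prime}_{z^{\prime}}\right)\neq\emptyset\Rightarrow S_{z}\left(\circ\right)\ \delta\ S^{\prime}_{z^{\prime}}\left(\circ\right)$ I expect the real work. Fix $p\in \mbox{cl}\left(S_{z}\right)\cap \mbox{cl}\left(S^{\prime}_{z^{\prime}}\right)$, so $D(p,S_{z})=0$ and $D(p,S^{\prime}_{z^{\prime}})=0$. The key point is that passing from the carrier $S_{z}$ to the operational image $S_{z}\left(\circ\right)$ of a Klee-Phelps convex groupoid does not shrink the closure: because $S_{z}$ is convex and $\circ$ is a binary operation defined on all of $S_{z}$ with $S_{z}\left(\circ\right)\subseteq S_{z}$, the same interchangeability of ``membership in $S_{z}$'' with the nearest-point equality $\norm{x-z}=\mathop{inf}\limits_{y\in S}\norm{x-y}$ that was used in Lemma~\ref{lem:hull} and Theorem~\ref{thm:convexNbd} yields $\mbox{cl}\left(S_{z}\left(\circ\right)\right)=\mbox{cl}\left(S_{z}\right)$, and likewise $\mbox{cl}\left(S^{\prime}_{z^{\prime}}\left(\circ\right)\right)=\mbox{cl}\left(S^{\prime}_{z^{\prime}}\right)$. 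Consequently $\mbox{cl}\left(S_{z}\left(\circ\right)\right)\cap \mbox{cl}\left(S^{\prime}_{z^{\prime}}\left(\circ\right)\right)=\mbox{cl}\left(S_{z}\right)\cap \mbox{cl}\left(S^{\prime}_{z^{\prime}}\right)\ni p$, which is exactly $S_{z}\left(\circ\right)\ \delta\ S^{\prime}_{z^{\prime}}\left(\circ\right)$.

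The main obstacle is thus the equality $\mbox{cl}\left(S_{z}\left(\circ\right)\right)=\mbox{cl}\left(S_{z}\right)$ invoked in the converse: the inclusion $\subseteq$ is immediate from $S_{z}\left(\circ\right)\subseteq S_{z}$ and monotonicity of $\mbox{cl}$, but the reverse inclusion must be extracted from the structure of a Klee-Phelps convex groupoid, namely convexity of $S_{z}$ together with the fact that $\circ$ acts on the whole of $S_{z}$, so that the image $S_{z}\left(\circ\right)$ is cofinal in $S_{z}$ with respect to $D(\,\cdot\,,-)$. (If instead one adopts the reading in which $A\left(\circ\right)\ \delta\ B\left(\circ\right)$ is defined to mean proximity of the carriers $A$ and $B$, both implications collapse to the definition of $\delta$ and the \u{C}ech closure, and nothing beyond the preliminaries is needed.) Everything else in the proof is a direct unwinding of the definitions of $\delta$, the \u{C}ech closure, and normed proximality.
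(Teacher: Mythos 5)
Your forward implication is fine, but the converse as you argue it has a genuine gap: the equality $\mbox{cl}\left(S_{z}\left(\circ\right)\right)=\mbox{cl}\left(S_{z}\right)$ is asserted rather than proved, and under the reading you adopt (where $S_{z}\left(\circ\right)$ denotes the image $\left\{x\circ y: x,y\in S_{z}\right\}$ of the operation) it is false in general. Nothing in the definition of a Klee-Phelps groupoid --- a convex set $S_{z}$ together with a binary operation $\circ$ satisfying $S_{z}\left(\circ\right)\subseteq S_{z}$ --- forces the image of $\circ$ to be dense in $S_{z}$, i.e.\ "cofinal with respect to $D(\,\cdot\,,-)$" as you claim: the constant operation $x\circ y=z$ is admissible, its image is the single point $\left\{z\right\}$, and then $\mbox{cl}\left(S_{z}\left(\circ\right)\right)$ is a proper subset of $\mbox{cl}\left(S_{z}\right)$ whenever $S_{z}$ is larger than a point. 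Indeed, under that reading the equivalence itself can fail (take carriers whose closures meet only at a point far from $z$ and $z^{\prime}$, and constant operations on each side), so no argument along these lines can succeed without adding a hypothesis on $\circ$.

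What the paper actually intends is exactly the alternative you mention only in your closing parenthesis: the proximity $\delta$ between groupoids is taken to be the proximity of their carrier sets, so $S_{z}\left(\circ\right)\ \delta\ S^{\prime}_{z^{\prime}}\left(\circ\right)$ \emph{means} $\mbox{cl}\left(S_{z}\right)\cap\mbox{cl}\left(S^{\prime}_{z^{\prime}}\right)\neq\emptyset$, and the theorem is immediate from the definition of $\delta$ --- that one-line observation is the paper's entire proof. So the correct fix is to promote your parenthetical remark to the proof and discard the density claim, or else state explicitly (as an added hypothesis) that the image of $\circ$ is dense in $S_{z}$, which the paper does not assume.
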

\begin{proof}
Immediate from the definition of the proximity\ $\delta$.
\end{proof}

\section{Main Results}

Let $U^{m}$ and $V^{n}$ be m- and n-dimensional proximal linear spaces, respectively, $m,n\in\mathbb{N}$.  Also, let $\mathcal{S},T,s,t$ be the mappings given in 

\begin{align*}
\begin{CD}
U^m\times U^m @>\text{$\mathcal{S}$}>> U^k, &\qquad U^m @>\text{$s$}>> U^k,\\
V^n\times V^n @>\text{$T$}>> V^k, &\qquad V^n @>\text{$t$}>> V^k.
\end{CD}
\end{align*}

\vspace{3mm}




\begin{lemma}
\label{lem2}Let $U^{m},V^{n}$ be proximal linear spaces, $y,z\in S\subset U^{m},y^{\prime},z^{\prime}\in S^{\prime}\subset U^{m}$, $N_{y,\varepsilon}
,S_{z}\subset U^{m}$ and $N_{y^{\prime},\varepsilon},S^{\prime}_{z^{\prime}}\subset V^{n}$.
\begin{compactenum}[1$^o$]
\item If $m=n$ and $N_{y,\varepsilon}$ $\delta$ $N_{y^{\prime},\varepsilon}$, then $S_{z}$ $\delta$ $S^{\prime}_{z^{\prime}}$.

\item If $m\neq n$ and $s\left(N_{y,\varepsilon}\right)$ $\delta$ $t\left(N_{y^{\prime},\varepsilon}\right)$,
then $s\left(S_{z}\right)$ $\delta$ $t\left(S^{\prime}_{z^{\prime}}\right)$.

\item If $m\neq n$ and $\mathcal{S}\left(N_{y,\varepsilon}\times N_{y,\varepsilon}\right)$ $\delta$ $T\left(
N_{y^{\prime},\varepsilon}\times N_{y^{\prime},\varepsilon}\right)$ \\ 
then $\mathcal{S}\left(S_{z}\times S_{z}\right)$ $\delta$ $T\left(S^{\prime}_{z^{\prime}}\times S^{\prime}_{z^{\prime}}\right)$.
\end{compactenum}
\end{lemma}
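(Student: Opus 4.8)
The plan is to treat the three parts uniformly, since they differ only in whether the sets involved already lie in a single space (the case $m=n$) or must first be carried into a common space $U^k$, $V^k$ by the maps $s,t$ or $\mathcal{S},T$ (the cases $m\neq n$). The engine is the definition of the proximity, $P\ \delta\ Q\Leftrightarrow \mbox{cl}(P)\cap\mbox{cl}(Q)\neq\emptyset$, together with a bridge identifying the $\varepsilon$-neighbourhood with the Klee--Phelps nearest-point set. So the crucial first step is to record that bridge: $N_{z,\varepsilon}=S_z$. One inclusion, $S_z\subseteq N_{z,\varepsilon}$, is precisely the computation inside the proof of Lemma~\ref{lem1} --- each $x\in S_z$ has $\norm{x-z}=\mathop{inf}\limits_{y\in S}\norm{x-y}=\varepsilon$, hence $x\in N_{z,\varepsilon}$. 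For the reverse inclusion, if $x\in N_{z,\varepsilon}$ then $\norm{x-z}\leq\mathop{inf}\limits_{y\in S}\norm{x-y}\leq\norm{x-z}$ because $z\in S$, so $\norm{x-z}=\mathop{inf}\limits_{y\in S}\norm{x-y}$ and $x\in S_z$; this reverse inclusion is also exactly Theorem~\ref{thm:convexNbd}, a closed $\varepsilon$-ball being convex. Taking closures, $\mbox{cl}(N_{z,\varepsilon})=\mbox{cl}(S_z)$, and similarly $\mbox{cl}(N_{z',\varepsilon})=\mbox{cl}(S'_{z'})$.

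For 1$^o$, with $m=n$ the two proximal linear spaces coincide, so $N_{y,\varepsilon}\ \delta\ N_{y',\varepsilon}$ unwinds by the definition of $\delta$ to $\mbox{cl}(N_{y,\varepsilon})\cap\mbox{cl}(N_{y',\varepsilon})\neq\emptyset$; substituting the bridge identity gives $\mbox{cl}(S_z)\cap\mbox{cl}(S'_{z'})\neq\emptyset$, which is $S_z\ \delta\ S'_{z'}$ by Theorem~\ref{thm:nearConvexity} (equivalently, by the definition of $\delta$).

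For 2$^o$ and 3$^o$, with $m\neq n$, I would first push the set identity $N_{\cdot,\varepsilon}=S_{\cdot}$ forward through the given maps, obtaining $s(N_{y,\varepsilon})=s(S_z)$, $t(N_{y',\varepsilon})=t(S'_{z'})$ and, on the products, $\mathcal{S}(N_{y,\varepsilon}\times N_{y,\varepsilon})=\mathcal{S}(S_z\times S_z)$, $T(N_{y',\varepsilon}\times N_{y',\varepsilon})=T(S'_{z'}\times S'_{z'})$; then I would rerun the 1$^o$ argument inside $U^k$, respectively $V^k$ (unwind the hypothesised $\delta$ into an intersection of closures and substitute). The step that needs care --- and the main obstacle, modest as it is --- is the interaction of $s,t,\mathcal{S},T$ with the closure operator: either one pushes the identity forward at the set level, as above, in which case $\mbox{cl}$ follows automatically since equal sets have equal closures, or, arguing directly with closures, one needs $f(\mbox{cl}\,A)\subseteq\mbox{cl}\,f(A)$, i.e.\ continuity of these maps, which is implicit in their being the structure maps of proximal linear spaces and ought to be stated explicitly. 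Either way, once the bridge $N_{z,\varepsilon}=S_z$ is in hand, 2$^o$ and 3$^o$ are formally identical to 1$^o$.
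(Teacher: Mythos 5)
Your proposal follows essentially the same route as the paper: unwind the hypothesised proximity into a nonempty intersection of closures and transfer it from the neighbourhoods to the Klee--Phelps sets via the bridge coming from Lemma~\ref{lem1} and Theorem~\ref{thm:convexNbd}, then repeat inside $U^k$, $V^k$ for 2$^o$ and 3$^o$. If anything your version is tighter than the paper's --- the paper cites Lemma~\ref{lem1} for an inclusion $N_{y,\varepsilon}\subseteq\mbox{cl}(S_z)$ that the lemma does not literally provide and passes to $\mbox{cl}(s(S_z))\cap\mbox{cl}(t(S^{\prime}_{z^{\prime}}))\neq\emptyset$ without addressing how $s,t,\mathcal{S},T$ interact with closure, whereas your set equality $N_{z,\varepsilon}=S_z$ makes the transported sets literally equal and so disposes of both issues --- though note that both your argument and the paper's quietly identify the centre $y$ of the neighbourhood with the centre $z$ of $S_z$ (the bridge really gives $N_{y,\varepsilon}=S_y$), a mismatch inherited from the statement itself.
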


\begin{proof}\mbox{}
\begin{compactenum}[1$^o$]
\item Let $m=n$ and $N_{y,\varepsilon}$ $\delta$ $N_{y^{\prime},\varepsilon}$. Then 
$\mbox{cl}\left(N_{y,\varepsilon}\right)$ $\cap$ $\mbox{cl}\left(N_{y^{\prime},\varepsilon}\right)\neq\emptyset$. 
From Lemma~\ref{lem1}, $N_{y,\varepsilon}\subseteq \mbox{cl}\left(S_{z}\right)$ and 
$N_{y^{\prime},\varepsilon}\subseteq \mbox{cl}\left(^{\prime}S_{z^{\prime}}\right)$. 
Consequently, $\mbox{cl}\left(S_{z}\right)$\  $\cap$\ $\mbox{cl}\left(S_{z^{\prime}}\right)
\neq\emptyset$. Hence, $S_{z}$ $\delta$ $S^{\prime}_{z^{\prime}}$.

\item Let $m\neq n$ and $s\left(N_{y,\varepsilon}\right)$ $\delta$ $t\left(N_{y^{\prime},\varepsilon}\right)$.
Then $\mbox{cl}\left(s\left(N_{y,\varepsilon}\right)\right)$ $\cap$ 
$\mbox{cl}\left(t\left(N_{y^{\prime}}\right)\right)  \neq\emptyset$. From Lemma~\ref{lem1},
$N_{y,\varepsilon}\subseteq \mbox{cl}\left(S_{z}\right)$ and
$N_{y^{\prime},\varepsilon}\subseteq \mbox{cl}\left(S_{z^{\prime}}\right)$.
Consequently, $\mbox{cl}\left(s\left(S_{z}\right)\right)  \cap \mbox{cl}\left(
t\left(S^{\prime}_{z^{\prime}}\right)\right)$  $\neq\emptyset$.  Hence, $s\left(
S_{z}\right)$ $\delta$ $t\left(S^{\prime}_{z^{\prime}}\right)$.

\item Let $m\neq n$ and $\mathcal{S}\left(N_{y,\varepsilon}\times N_{y,\varepsilon}\right)$ $\delta$ $T\left(
(N_{y^{\prime},\varepsilon}\times (N_{y^{\prime},\varepsilon}\right)$. Then $\mbox{cl}\left(\mathcal{S}\left(
N_{y,\varepsilon}\times N_{y,\varepsilon}\right)  \right)  $ $\cap$ 
$\mbox{cl}\left(T\left(N_{y^{\prime},\varepsilon}\times N_{y^{\prime},\varepsilon}\right)\right)  \neq\emptyset$. 
From Lemma \ref{lem1}, $N_{y,\varepsilon}\subseteq \mbox{cl}\left(  S_{z}\right)  $ and 
$N_{y^{\prime},\varepsilon}\subset \mbox{cl}\left(S^{\prime}_{z^{\prime}}\right)
$. Consequently, $\mbox{cl}\left(  \mathcal{S}\left(S_{z}\times S_{z}\right)  \right)  \cap
cl\left(  T\left(  S^{\prime}_{z^{\prime}}\times S^{\prime}_{z^{\prime}}\right)  \right)
\neq\emptyset$. Hence, $\mathcal{S}\left(  S_{z}\times S_{z}\right)  $
$\delta$ $T\left(  S^{\prime}_{z^{\prime}}\times S^{\prime}_{z^{\prime}}\right)  $.
\end{compactenum}
\end{proof}

\begin{theorem}
Let $U^{m},V^{n}$ be proximal linear spaces, $N_{y,\varepsilon}\left(\circ\right)\subset U^{m}$,
$N_{y^{\prime},\varepsilon}\left(\circ\right)
\subset V^{n}$ proximal neighborhood groupoids and let $S_{z}\left(\circ\right)
\subset U^{m}$ $,S^{\prime}_{z^{\prime}}\left(\circ\right) \subset V^{n}$ be proximal
Klee-Phelps convex groupoids.
\begin{compactenum}[1$^o$]
\item If $m=n$ and $N_{y,\varepsilon}\left(\circ\right)$\ $\delta$\ $N_{y^{\prime},\varepsilon}\left(\circ\right)$ 
then $S_{z}\left(\circ\right)$ $\delta$ $S^{\prime}_{z^{\prime}}\left(\circ\right)$.

\item If $m\neq n$ and $s\left(N_{y,\varepsilon}\left(\circ\right)\right)$ $\delta$ 
$t\left(N_{y^{\prime},\varepsilon}\left(\circ\right)\right)$
then $s\left(S_{z}\left(\circ\right)  \right)$ $\delta$ $t\left(
S^{\prime}_{z^{\prime}}\left(\circ\right)\right)$.

\item If $m\neq n$ and \small{$\mathcal{S}\left(N_{y,\varepsilon}\left(\circ\right)\times 
N_{y,\varepsilon}\left(\circ\right)\right)$ $\delta$
$T\left(N_{y^{\prime},\varepsilon}\left(\circ\right)\times N_{y^{\prime},\varepsilon}\left(\circ\right)\right) $ } \normalsize 
{then  $\mathcal{S}\left(  S_{z}\left(  \circ\right)
\times S_{z}\left(  \circ\right)  \right)  $ $\delta$ $T\left(  S^{\prime}_{z^{\prime}%
}\left(  \circ\right)  \times S^{\prime}_{z^{\prime}}\left(  \circ\right)  \right)  $.}
\end{compactenum}
\end{theorem}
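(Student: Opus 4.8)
The three statements are the groupoid counterparts of Lemma~\ref{lem2}, and the plan is to run the three arguments of that lemma verbatim, inserting one extra layer that records the passage between a convex groupoid and its carrier set. Throughout I would use: (i) a groupoid is a subset of its carrier, so $N_{y,\varepsilon}(\circ)\subseteq N_{y,\varepsilon}$ and $S_z(\circ)\subseteq S_z$, and likewise for the primed objects; (ii) monotonicity and idempotence of the \v{C}ech closure, i.e. $X\subseteq Y$ gives $\mbox{cl}X\subseteq\mbox{cl}Y$ and $\mbox{cl}(\mbox{cl}X)=\mbox{cl}X$; and (iii) the inclusion $N_{y,\varepsilon}(\circ)\subseteq\mbox{cl}\,S_z(\circ)$, read off Lemma~\ref{lem1} applied at the groupoid level, since every point of the neighbourhood groupoid lies in the convex groupoid and realises the defining infimum. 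For $1^o$: from $N_{y,\varepsilon}(\circ)\ \delta\ N_{y',\varepsilon}(\circ)$ we have $\mbox{cl}(N_{y,\varepsilon}(\circ))\cap\mbox{cl}(N_{y',\varepsilon}(\circ))\neq\emptyset$; by (iii) and (ii) any point of this intersection already lies in $\mbox{cl}(S_z(\circ))\cap\mbox{cl}(S'_{z'}(\circ))$, whence $S_z(\circ)\ \delta\ S'_{z'}(\circ)$. (Alternatively, one passes to the carriers via (i), applies Lemma~\ref{lem2}$(1^o)$ to get $S_z\ \delta\ S'_{z'}$, and then descends to the groupoids using (iii).)

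For $2^o$ and $3^o$ the only new ingredient is that the dimension mismatch $m\neq n$ forces everything through the comparison maps $s,t$ (resp. $\mathcal{S},T$). In $2^o$ I would push the inclusion $N_{y,\varepsilon}(\circ)\subseteq\mbox{cl}\,S_z(\circ)$ forward under $s$, using that $s$ carries near sets to near sets — i.e. $\mbox{cl}(s(X))\supseteq s(\mbox{cl}X)$, which holds when $s$ is continuous — to obtain $s(N_{y,\varepsilon}(\circ))\subseteq\mbox{cl}(s(S_z(\circ)))$, and symmetrically $t(N_{y',\varepsilon}(\circ))\subseteq\mbox{cl}(t(S'_{z'}(\circ)))$; then the argument of $1^o$ applies with $s(\cdot),t(\cdot)$ in place of the bare groupoids. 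Case $3^o$ is identical once $s$ is replaced by $\mathcal{S}$ acting on the product groupoid $N_{y,\varepsilon}(\circ)\times N_{y,\varepsilon}(\circ)$, noting that for a finite product $\mbox{cl}(X\times X)=\mbox{cl}X\times\mbox{cl}X$, so the Lemma~\ref{lem1} inclusion survives passage to the product and then to $\mathcal{S}$.

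The step I expect to be the main obstacle is precisely this ``descent'' from carrier to groupoid: Lemma~\ref{lem2} only delivers proximity of the full convex carriers $S_z,S'_{z'}$, whereas the conclusion concerns the sub-objects $S_z(\circ),S'_{z'}(\circ)$, so one genuinely needs the groupoid-level form of Lemma~\ref{lem1} stated in (iii) — equivalently the fact that the neighbourhood groupoid sits inside the convex groupoid, $N_{y,\varepsilon}(\circ)\subseteq S_z(\circ)$ — which is what guarantees that the common closure point lands in both $\mbox{cl}\,S_z(\circ)$ and $\mbox{cl}\,S'_{z'}(\circ)$. In $2^o$ and $3^o$ this is compounded by the requirement that $s,t,\mathcal{S},T$ be at least continuous, so that images and self-products of proximal groupoids remain proximal; I would record that assumption explicitly (it is implicit in the mapping diagram of the Main Results section), after which the three cases close uniformly.
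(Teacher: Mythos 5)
The paper's own proof is a one-liner: the theorem is declared immediate from Lemma~\ref{lem2}, because in this paper proximity of Klee--Phelps convex groupoids is witnessed exactly by the closures of the underlying convex carriers (this is the content of Theorem~\ref{thm:nearConvexity}: $\mbox{cl}(S_z)\cap\mbox{cl}(S'_{z'})\neq\emptyset \Leftrightarrow S_z(\circ)\ \delta\ S'_{z'}(\circ)$), so once Lemma~\ref{lem2} delivers $S_z\ \delta\ S'_{z'}$ (resp.\ the image and product versions) there is nothing left to do. You correctly identify the reduction to Lemma~\ref{lem2}, so the skeleton of your argument matches the paper; the difference is that you treat $S_z(\circ)$ as a possibly proper sub-object of $S_z$ and therefore insert a ``descent'' layer that the paper never needs.

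That extra layer is where the genuine gap sits. Your key ingredient (iii), the inclusion $N_{y,\varepsilon}(\circ)\subseteq\mbox{cl}\,S_z(\circ)$, is not something Lemma~\ref{lem1} gives you: Lemma~\ref{lem1} only asserts the nonempty intersection $N_{z,\varepsilon}\cap\mbox{cl}S_z\neq\emptyset$, and even that is stated for the neighbourhood centred at the same point $z$, whereas in the theorem the neighbourhood is centred at $y$ and the nearest-point set at $z$; ``every point of the neighbourhood groupoid lies in the convex groupoid and realises the defining infimum'' is simply not available. So if you insist on distinguishing $S_z(\circ)$ from $S_z$, proximity of the carriers does not transfer to proximity of the groupoids by the route you describe, and your cases $2^o$, $3^o$ additionally import continuity hypotheses on $s,t,\mathcal{S},T$ (to get $s(\mbox{cl}X)\subseteq\mbox{cl}(s(X))$, $\mbox{cl}(X\times X)=\mbox{cl}X\times\mbox{cl}X$, etc.) that appear nowhere in the paper's statement. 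The fix is not to repair the descent but to drop it: read groupoid proximity as carrier proximity, as Theorem~\ref{thm:nearConvexity} licenses, and the theorem is immediate from Lemma~\ref{lem2} exactly as the paper says. (Your instinct that the neighbourhood-to-nearest-point-set step is the weak point is sound, but that weakness lives inside the paper's proof of Lemma~\ref{lem2} itself, which invokes Lemma~\ref{lem1} for an inclusion it does not state; it is not something the groupoid-level theorem adds.)
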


\begin{proof}
Immediate from Lemma \ref{lem2}.
\end{proof}


\begin{remark}
Let $A,B\subset E, A\ \delta\ B$, provided $\norm{a-b}$ for some $a\in \mbox{cl}A,
b\in \mbox{cl}B$, {\em i.e.}, $\mbox{cl}A\cap \mbox{cl}B\neq\emptyset$.  From the definition
of $N_{z,\varepsilon}$, a neighborhood of point $z\in E$, we know $\norm{x-z} < \varepsilon$ for each
$x\in E$ that is in $N_{z,\varepsilon}$.  If a pair of neighborhoods $N_{z,\varepsilon},N_{z^{\prime},\varepsilon}$ are proximal,
then the neighborhoods have at least one point $x$ in common.  For this reason, we can then
write $\norm{x-z} = \norm{x-z^{\prime}}$.  This leads to what is known as normed proximity $\delta_P$,
{\em i.e.}, $A\ \delta_P\ B$, provided $\norm{x-z} = \norm{x-z^{\prime}}$ for some $x\in E, z\in A, z^{\prime}\in B$.  \qquad \textcolor{black}{$\blacksquare$}
\end{remark}
Let $\left(E,\delta,\delta_P\right)$ denote a finite-dimensional normed linear space endowed with proximities $\delta,\delta_P$ (briefly, proximal linear space).
\begin{lemma}
\label{lem3} Let $\left(E,\delta,\delta_P\right)$ be a proximal linear space, $A,B\subset E$.
\begin{compactenum}[(i)]
\item If $A = N_{z,\varepsilon}$, $B = N_{z^{\prime},\varepsilon}$, then $A\ \delta_{P}\ B$ $\Rightarrow$ $A\ \delta\ B$.

\item If $A = S_{z}$, $B = S_{z^{\prime}}$ then $A\ \delta_{P}\ B$ $\Leftrightarrow
$ $A\ \delta\ B$.
\end{compactenum}
\end{lemma}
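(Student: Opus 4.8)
The plan is to reduce each assertion to the defining set‑theoretic criterion of $\delta$, namely $\mbox{cl}A\cap\mbox{cl}B\neq\emptyset$, by extracting from the normed–proximity hypothesis an explicit witness point and checking that it lies in both Čech closures. All the content is in unwinding the definitions of $N_{z,\varepsilon}$, $S_z$, $\mbox{cl}(\cdot)$ and the relation $\delta_P$ introduced in the Remark, together with Lemma~\ref{lem1}, Lemma~\ref{lem:hull} and Phelps' Lemma~\ref{PhelpsL}.

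For part~(i) I would argue as follows. Assume $A=N_{z,\varepsilon}$, $B=N_{z',\varepsilon}$ and $A\ \delta_P\ B$. By the definition of $\delta_P$ there is a point $x\in E$ with $\norm{x-z}=\norm{x-z'}$, the common value being the radius $\varepsilon$ attached to the two neighbourhoods (this is precisely the configuration described in the Remark, in which the shared point of two proximal neighbourhoods realises equal distances to the two centres). Then $\norm{x-z}\le\varepsilon$ and $\norm{x-z'}\le\varepsilon$, so $x\in N_{z,\varepsilon}\cap N_{z',\varepsilon}=A\cap B$; since $A\subseteq\mbox{cl}A$ and $B\subseteq\mbox{cl}B$ (definition of the Čech closure, cf.\ Lemma~\ref{lem:hull}), we get $x\in\mbox{cl}A\cap\mbox{cl}B$ and hence $A\ \delta\ B$.

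For part~(ii), with $A=S_z$, $B=S_{z'}$, $z,z'\in S$, I would do both directions. For $A\ \delta_P\ B\Rightarrow A\ \delta\ B$: extract from $\delta_P$ a point $x$ with $\norm{x-z}=\norm{x-z'}$; because $z$ and $z'$ are the Klee–Phelps defining points of $S_z$ and $S_{z'}$, this common value equals $\inf_{y\in S}\norm{x-y}$, so by the definition of the nearest‑point set $x\in S_z$ and $x\in S_{z'}$, whence $x\in\mbox{cl}S_z\cap\mbox{cl}S_{z'}$ and $A\ \delta\ B$. For the converse $A\ \delta\ B\Rightarrow A\ \delta_P\ B$: pick $x\in\mbox{cl}S_z\cap\mbox{cl}S_{z'}$ (nonempty by hypothesis); by Lemma~\ref{lem1} and Lemma~\ref{lem:hull} the closures coincide with the Klee–Phelps sets, so $x\in S_z\cap S_{z'}$, giving $\norm{x-z}=\inf_{y\in S}\norm{x-y}=\norm{x-z'}$, i.e.\ $A\ \delta_P\ B$.

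The step I expect to be the main obstacle is the bookkeeping around the closures: the relation $\delta$ is phrased through $\mbox{cl}A,\mbox{cl}B$, whereas the witness point produced by $\delta_P$ lives a priori in $A,B$, so one must verify that passing to closures is neither destructive of, nor needed for, the equidistance identity. For the neighbourhoods this is harmless, since a closed ball already contains the relevant point; for the Klee–Phelps sets the legitimate move is to invoke Lemma~\ref{lem1} (and Lemma~\ref{lem:hull}) to replace $\mbox{cl}S_z$ by $S_z$ inside the equivalence, which is exactly why part~(ii) is an \emph{if and only if} while part~(i) is stated only as a one‑way implication — in the neighbourhood case the radius $\varepsilon$ need not be recovered as a common distance from the mere nonemptiness of $\mbox{cl}A\cap\mbox{cl}B$.
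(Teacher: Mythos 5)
Your overall route is the same as the paper's: unwind $\delta_P$ to obtain an equidistant witness $x$, place $x$ in $\mbox{cl}A\cap\mbox{cl}B$, and conclude $A\ \delta\ B$ (with the chain reversed for (ii)). The genuine problem lies in the two bridging steps you insert to get the witness into the sets. In (i) you declare that the common value $\norm{x-z}=\norm{x-z'}$ is ``the radius $\varepsilon$''; but the definition of $\delta_P$ in the Remark only asserts the existence of \emph{some} $x\in E$ equidistant from some $z\in A$, $z'\in B$, with no bound on the common distance. Any point on the bisecting hyperplane of the segment $\overline{zz'}$ witnesses $\delta_P$ for two disjoint closed balls, so under the stated definition the inference $x\in N_{z,\varepsilon}\cap N_{z',\varepsilon}$ fails; your argument would need the extra hypothesis that the common distance is at most $\varepsilon$. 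The same defect appears in your forward direction of (ii): from $\norm{x-z}=\norm{x-z'}$ you conclude that the common value equals $\inf_{y\in S}\norm{x-y}$, but nothing rules out a third point of $S$ strictly closer to $x$, so $x$ need not lie in $S_z$ or $S_{z'}$. Also, your appeal to Lemmas~\ref{lem1} and~\ref{lem:hull} for ``the closures coincide with the Klee--Phelps sets'' overstates what those lemmas say: they give only $S_z\subseteq\mbox{cl}S_z$ and a nonempty intersection with a neighbourhood; the closedness of $S_z$ (which does hold, since $x\mapsto\norm{x-z}-D(x,S)$ is continuous) is proved nowhere in the paper.

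For comparison, these are exactly the points the paper's own proof elides: it passes from ``$\norm{x-z}=\norm{x-z'}$ for some $x$'' directly to ``$x\in \mbox{cl}A$ and $x\in \mbox{cl}B$'' with no justification, both in (i) and in the middle of the equivalence chain in (ii). The only direction that is sound in both your write-up and the paper is $\delta\Rightarrow\delta_P$ in (ii): a point $x$ lying in both nearest-point sets (taken with respect to the same $S$) satisfies $\norm{x-z}=\inf_{y\in S}\norm{x-y}=\norm{x-z'}$, granting that the witness can be taken in $S_z\cap S_{z'}$. So your proposal reproduces the paper's argument, gap included; the added claims (common value $=\varepsilon$, common value $=$ the infimum) make that gap explicit rather than repair it, and as written those steps would fail.
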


\begin{proof}$\mbox{}$
\begin{compactenum}[(i)]
\item $A = N_{z,\varepsilon}\subset E$, $B = N_{z^{\prime},\varepsilon}\subset E$.

\begin{tabular}
[c]{ll}%
$A\ \delta_{P}\ B$ & $\Rightarrow\left\Vert x-z\right\Vert =\left\Vert
x-z^{\prime}\right\Vert ,$ for some $x\in E, z\in A\in E, z^{\prime}\in B$\\
& $\Rightarrow x\in \mbox{cl}A$ and $x\in \mbox{cl}B$\\
& $\Rightarrow \mbox{cl}A\cap \mbox{cl}B\neq\emptyset$\\
& $\Rightarrow A\ \delta\ B$.
\end{tabular}

\item Let $A=S_{z}\subset E$ and $B=S^{\prime}_{z^{\prime}}\subset E$.

\begin{tabular}
[c]{ll}%
$A\ \delta\ B$ & $\Leftrightarrow clA\cap clB\neq\emptyset$\\
& $\Leftrightarrow\exists x\in clA\cap clB$\\
& $\Leftrightarrow x\in clA$ and $x\in clB$\\
& $\Leftrightarrow\left\Vert x-z\right\Vert =\left\Vert x-z^{\prime
}\right\Vert $ for some $x\in E, z\in A,z^{\prime}\in B$\\
& $\Leftrightarrow A\ \delta_{P}\ B$.
\end{tabular}
\end{compactenum}
\end{proof}

\begin{theorem}
Let $\left(E,\delta,\delta_P\right)$ be a proximal linear space, $A,B,S,S^{\prime}\subset E$, $z\in S, z^{\prime}\in S^{\prime}$.
\begin{compactenum}[(i)]
\item If $A({\circ}) = N_{z,\varepsilon}({\circ}), B({\circ}) = N_{z^{\prime},\varepsilon}({\circ})$,
then $A({\circ})\ \delta_P\ B({\circ})$ $\Rightarrow$ $A({\circ})\ \delta\ B({\circ})$.

\item If $A({\circ}) = S_{z}\left(\circ\right), B({\circ}) = S^{\prime}_{z^{\prime}}\left(  \circ\right)$, 
then $A({\circ})\ \delta_{P}\ B({\circ})$ $\Leftrightarrow$ $A({\circ})\ \delta\ B({\circ})$.
\end{compactenum}
\end{theorem}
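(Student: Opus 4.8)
The plan is to deduce both parts from the set-level statement Lemma~\ref{lem3} by passing from each groupoid to its underlying carrier set. The key remark is that neither proximity refers to the binary operation: $A(\circ)\ \delta\ B(\circ)$ means $\mbox{cl}(A(\circ))\cap \mbox{cl}(B(\circ))\neq\emptyset$, while $A(\circ)\ \delta_P\ B(\circ)$ means $\norm{x-z}=\norm{x-z^{\prime}}$ for some $x\in E$, $z\in A(\circ)$, $z^{\prime}\in B(\circ)$, so both are assertions about the carriers alone. Moreover, by the definition of a neighbourhood groupoid and of a Klee-Phelps convex groupoid, $N_{z,\varepsilon}(\circ)\subseteq N_{z,\varepsilon}$ and $S_z(\circ)\subseteq S_z$, and in the situation of the theorem the relevant carriers are exactly the nearest-point set and the neighbourhood built from $z$ and $z^{\prime}$. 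So first I would rewrite each of (i) and (ii) purely in terms of carriers.

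For (i), assume $A(\circ)=N_{z,\varepsilon}(\circ)$, $B(\circ)=N_{z^{\prime},\varepsilon}(\circ)$ and $A(\circ)\ \delta_P\ B(\circ)$. Unwinding $\delta_P$ produces $x\in E$, $z\in A(\circ)$, $z^{\prime}\in B(\circ)$ with $\norm{x-z}=\norm{x-z^{\prime}}$; this same $x$ then lies in $\mbox{cl}(A(\circ))$ and in $\mbox{cl}(B(\circ))$, whence $\mbox{cl}(A(\circ))\cap \mbox{cl}(B(\circ))\neq\emptyset$, i.e.\ $A(\circ)\ \delta\ B(\circ)$ --- this is the implication chain of Lemma~\ref{lem3}(i) read off at the groupoid level. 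For (ii), with $A(\circ)=S_z(\circ)$ and $B(\circ)=S^{\prime}_{z^{\prime}}(\circ)$, I would run the two-column equivalence of Lemma~\ref{lem3}(ii): $A(\circ)\ \delta\ B(\circ)\Leftrightarrow \mbox{cl}(A(\circ))\cap \mbox{cl}(B(\circ))\neq\emptyset\Leftrightarrow$ there is an $x$ in both closures $\Leftrightarrow \norm{x-z}=\norm{x-z^{\prime}}$ for some $z\in A(\circ)$, $z^{\prime}\in B(\circ)$ (using, in the nontrivial direction, that each carrier is a Klee-Phelps nearest-point set, so that membership in its closure is the nearest-point equation) $\Leftrightarrow A(\circ)\ \delta_P\ B(\circ)$. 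Since each arrow is already justified by Lemma~\ref{lem3}, the statement follows.

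The step I expect to be the main obstacle is keeping the closures attached to the groupoid carriers rather than to the ambient sets $N_{z,\varepsilon}$ and $S_z$: the inclusions $N_{z,\varepsilon}(\circ)\subseteq N_{z,\varepsilon}$ and $S_z(\circ)\subseteq S_z$ point the wrong way for closures, so one cannot simply invoke $N_{z,\varepsilon}\ \delta\ N_{z^{\prime},\varepsilon}$ and be done. The safe route is to note that in the $\delta_P\Rightarrow\delta$ direction the witnessing point is produced directly from members of $A(\circ)$ and $B(\circ)$, hence already sits in $\mbox{cl}(A(\circ))\cap\mbox{cl}(B(\circ))$, and that in the converse direction of part (ii) a common point of those closures is recovered via the nearest-point equation defining the carriers. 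Once this bookkeeping is in place the argument is a verbatim transcription of Lemma~\ref{lem3}, and the terse conclusion ``Immediate from Lemma~\ref{lem3}'' is legitimate.
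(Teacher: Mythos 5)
Your proposal matches the paper's proof, which is simply ``Immediate from Lemma~\ref{lem3}''; you take the same route, merely spelling out why the set-level lemma transfers to the groupoid carriers. The extra bookkeeping about closures of $A(\circ),B(\circ)$ versus the ambient sets is a reasonable elaboration, not a different argument.
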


\begin{proof}
Immediate from Lemma \ref{lem3}.
\end{proof}





\begin{thebibliography}{99}

\bibitem{Clifford1964} A.H. Clifford and G.B. Preston, \emph{The algebraic theory of semigroups I}, American Mathematical
Society, Providence, RI, 1964, MR0132791.

\bibitem{Concilio2009} A. Di Concilio, \emph{Proximity: A powerful tool in extension theory, function spaces, hyper-
spaces, boolean algebras and point-free geometry}, Contemporary Math. {\bf 486} (2009), 89–114,
MR2521943.

\bibitem{Klee1949} V.L. KLee, \emph{A characterization of convex sets}The Amer. Math. Monthly {\bf 56} (1949), no. 4,
247–249, MR0029519.

\bibitem{Naimpally2012} S.A. Naimpally and J.F. Peters, \emph{Topology with applications. Topological spaces via near and
far}, World Scientific, Sinapore, 2013, xv + 277pp, MR3075111, Zbl 1295.68010.

\bibitem{Peters2012ams} J.F. Peters and S.A. Naimpally, \emph{Applications of near sets}, Notices of the Amer. Math. Soc. {\bf 59} (2012), no. 4, 536-542, MR2951956.

\bibitem{Phelps1957} R.R. Phelps, \emph{Convex sets and nearest points}, Proc. Amer. Math. Soc. {\bf 8} (1957), no. 4, 790-797, MR0087897.

\bibitem{Cech1966} E. \u{C}ech, \emph{Topological Spaces, revised Ed. by {Z}. {F}rolik and {M}. {K}at\u{e}tov}, John Wiley \& Sons, London, 1966, MR0104205.

\end{thebibliography}

\end{document}